\documentclass[11pt]{amsart}

\usepackage{amssymb,amsthm,amsmath}
\usepackage{hyperref}
\usepackage[alphabetic]{amsrefs}
\usepackage[all]{xy}
\usepackage{enumerate}



\usepackage[margin=1.25in]{geometry}

\newtheorem{theorem}{Theorem}[section]
\newtheorem{corollary}[theorem]{Corollary}
\newtheorem{lemma}[theorem]{Lemma}
\newtheorem{proposition}[theorem]{Proposition}

\theoremstyle{remark}

\numberwithin{equation}{section}

\newcommand{\N}{\mathbb{N}}
\newcommand{\T}{\mathbb{T}}

\newcommand{\C}{\mathbb{C}}

\newcommand{\Rat}{\mathcal{R}}
\newcommand{\ord}{\mathrm{ord}}

\renewcommand{\epsilon}{\varepsilon}
\renewcommand{\leq}{\leqslant}
\renewcommand{\geq}{\geqslant}


\title[Similarity of  perturbations of the shift...]{
Similarity of  perturbations of the shift and a different product of rational functions}
\author{Leonel Robert}
\begin{document}
\maketitle

\begin{abstract} 
Necessary and sufficient conditions are given for the similarity between two perturbations of the (backward) shift by rank one operators, under certain assumptions on the perturbations. The proof of similarity is based on an explicit  construction of intertwiners between the  perturbations. These intertwiners, in turn, are  parametrized by the elements of a certain algebra, with the group of ``circle invertible" elements of this algebra giving rise to invertible intertwiners.
\end{abstract}

\section{Introduction}
Let $H^2$ denote the Hardy space of the circle. Let $U\colon H^2\to H^2$ denote the backward shift operator.
The perturbations of $U$ (or $U^*$) by a rank one operator  have been occasionally studied  and shown to have a rich theory  (see \cite{clark}, \cite{nakamura}, \cite{cassier-timotin}). It is shown in \cite{robertsim} that a large class of perturbations of $U$ by small and/or compact operators  are in fact similar to  perturbations of $U$ by rank one operators. This  motivates the question addressed in this paper:  when are two perturbations of $U$ by rank one operators similar? Theorem \ref{main} below gives necessary and sufficient conditions for the operators 
$U+r\otimes \phi$ and $U+s\otimes\phi$ to be similar, under the assumption that $r$ and $s$ are rational functions
in $H^2$ (i.e., with poles outside of the closed  unit disc). Although assuming that  $r$ and $s$
are rational certainly simplifies the analysis, we will see how even in this case an interesting algebraic structure remains. The unraveling of this structure leads to the solution of  the similarity problem.

Let us
introduce some notation. Let $D$ denote the closed unit disc. Let $\Rat(D)$ denote the rational functions with poles
outside $D$. Let $\phi\in H^2$ and $r\in \Rat(D)$. For each $|w|<1$, define 
\begin{align*}
\Gamma_+(w;r) =w\langle r,\frac{\phi}{1-\overline{w}z}\rangle,\mbox{ \quad }
\Gamma_-(w;r) =\langle \frac{\phi}{w-z},r\rangle.
\end{align*}
These functions are analytic in $w$. It will be shown below that $\Gamma_+(\cdot,r)$ is a rational function with poles outside $D$; in particular, it extends analytically to a neighborhood of $D$. 
Let $\ord_w(f)$ denote the order of the zero of $f$ on $w$, where $f$ is analytic in a neighborhood
of $w$.

\begin{theorem}\label{main}
Let $r,s\in \Rat(D)$.
The following propositions are equivalent:
\begin{enumerate}[(i)]
\item
The operators $U+r\otimes \phi$ and $U+s\otimes \phi$ are similar.
\item
\begin{enumerate}
\item
For each $|w|\leq 1$, $\ord_w(1-\Gamma_+ (w;r))=\ord_w(1-\Gamma_+(w;s))$, and 
\item
for each $|w|<1$,
\[\min(\ord_w(\phi),\ord_w(1-\Gamma_- (w;r)))=\min(\ord_w(\phi),\ord_w(1-\Gamma_-(w;s))).\]
\end{enumerate}
\end{enumerate}
\end{theorem}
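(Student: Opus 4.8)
The plan is to translate the similarity of the two operators into the existence of an invertible operator that intertwines them, and then to reduce both the construction of intertwiners and the test for invertibility to the elementary analytic data carried by $\Gamma_\pm$. Writing $T_r = U + r\otimes\phi$ and $T_s = U + s\otimes\phi$, I would first record the rank-one resolvent (Sherman--Morrison) identity for $(T_r - \lambda)^{-1}$, whose scalar denominator is the perturbation determinant $1 + \langle (U-\lambda)^{-1}r,\phi\rangle$. A direct computation using $(U-\lambda)^{-1} = -\sum_{k\ge0}\lambda^{-k-1}U^k$ for $|\lambda|>1$ and the expansion $\Gamma_+(w;r) = \sum_{k\ge1}\langle U^{k-1}r,\phi\rangle w^k$ identifies this denominator with $1 - \Gamma_+(1/\lambda;r)$. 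Thus the zeros of $1-\Gamma_+$ inside $D$ are exactly the reciprocals of the eigenvalues of $T_r$ lying outside $D$, and $\ord_w(1-\Gamma_+)$ records the associated algebraic multiplicity (equivalently the Jordan data); the extension of $\Gamma_+$ to a neighbourhood of $D$, guaranteed by the preliminary discussion, lets me read the same invariant on $|w|=1$. This makes (ii)(a) a statement about the exterior spectral picture of the two operators.

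Second, I would analyze the interior, where every point of the open disc is already an eigenvalue of $U$ (with eigenvector $k_{\bar\mu}$) and hence, up to the rank-one term, of $T_r$. Solving $T_r g = \mu g$ with $|\mu|<1$ reduces to one scalar constraint $\alpha\bigl(1-\Gamma_-(\,\cdot\,;r)\bigr) = g(0)\,\overline{\phi(\cdot)}$ (in the variable $w=\bar\mu$, up to conjugation that does not affect orders), after identifying $\langle \tfrac{zr}{1-\mu z},\phi\rangle$ with $\Gamma_-$ via its expansion $\Gamma_-(w;r) = -\sum_{k\ge0}\langle U^{k+1}\phi,r\rangle w^k$. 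The eigenspace is one-dimensional unless both coefficients vanish, i.e. at common zeros of $\phi$ and $1-\Gamma_-$; tracking orders shows the genuine interior invariant is $\min\!\bigl(\ord_w(\phi),\ord_w(1-\Gamma_-(w;r))\bigr)$, which is precisely the quantity in (ii)(b). Establishing that this $\min$ is the correct similarity invariant --- rather than the individual orders --- is the first delicate point, since $\phi$ is common to both operators and only the coupled degeneracy transfers under conjugation.

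For the implication (i) $\Rightarrow$ (ii), I would argue that an invertible intertwiner carries generalized eigenspaces of $T_r$ isomorphically onto those of $T_s$ and preserves the interior degeneracy structure, so the two invariants isolated above must agree; this yields both clauses of (ii). The substantive direction is (ii) $\Rightarrow$ (i), and here the plan is to build the intertwiner explicitly. I would look for $X$ of the form $T_\psi^{*} + F$ with $\psi$ rational and $F$ of finite rank, solve the intertwining equation $XT_r = T_s X$ (which, after stripping off the common $U$, becomes a commutator identity equating two rank-one operators), and show that the solutions are parametrized by an auxiliary algebra of rational functions equipped with the ``different product'' of the title --- the product that encodes composition $X_f X_g = X_{f\ast g}$. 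The last step is to match an invertible intertwiner with a $\ast$-invertible (``circle invertible'') element of this algebra, and to verify that the existence of such an element is governed exactly by the order equalities in (ii): clause (a) clears the exterior zeros of $1-\Gamma_+$ and the boundary, while clause (b), through the $\min$ with $\ord_w(\phi)$, clears the interior coupling.

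The hardest part will be the sufficiency construction: proving that the parametrization by the twisted algebra is exhaustive (every intertwiner arises this way) and that two-sided operator invertibility is equivalent to invertibility in the algebra. The circle $|w|=1$ is the pressure point --- there $T_r$ borders its essential spectrum, $\Gamma_+$ is evaluated up to the boundary, and the finite-rank corrections must be arranged so that the candidate $X$ is bounded with bounded inverse rather than merely formally intertwining. I expect to handle this by factoring the problem at the zeros dictated by (ii), reducing to the case where $1-\Gamma_+$ and $1-\Gamma_-$ are zero-free (so the algebra element is a unit), and then assembling the global intertwiner from these local pieces.
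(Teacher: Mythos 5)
Your overall architecture coincides with the paper's: necessity via spectral multiplicities read off from $1-\Gamma_{\pm}$, and sufficiency via explicit intertwiners parametrized by an algebra of rational functions carrying a twisted product, with circle-invertible elements supplying invertible intertwiners. But two genuine gaps remain. First, in the sufficiency direction everything hinges on actually determining the structure of the twisted algebra, and this is absent from your plan. The paper shows that the symbol map $\gamma_+(r)=zT_{\overline\phi}(r)$ is a homomorphism onto $z\Rat(D)$ with the ordinary product, that the resulting exact sequence splits, and that $\ker\gamma_+$ decomposes as a ring-theoretic direct sum of local algebras $\mathcal S_a^{N_a}\cong(\C[x]/(x^{N_a}))^\sim$ over the zeros $a$ of $\phi$; it is precisely this decomposition that converts ``$r$ and $s$ lie in the same orbit of the circle group'' into conditions (a) and (b). Without it you cannot pass from the order equalities to the existence of a circle-invertible $t$ with $r\circ t=s$. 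Moreover, the difficulties you single out as hardest are not the real ones: exhaustiveness of the parametrization of intertwiners is never needed (one only needs enough of them), and the boundedness/two-sided-invertibility worry at $|w|=1$ dissolves once one has $(I-K_r)(I-K_s)=I-K_{r\circ s}$, since a circle inverse in the algebra hands you a bounded two-sided operator inverse for free.

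Second, your necessity argument has a hole at $|w|=1$. The identification of $1-\Gamma_+(1/\lambda;r)$ with the perturbation determinant is valid only for $|\lambda|>1$, and the order of vanishing of $1-\Gamma_+(\cdot\,;r)$ at a boundary point is not determined by its zeros in the interior, so ``reading the same invariant on $|w|=1$ by analytic extension'' does not work: one must show directly that $\ord_w(1-\Gamma_+(w;r))$ is a similarity invariant for $|w|=1$. The paper does this by computing $\dim\ker(1-wU_r)^k$ at boundary points as well, using the densely defined left inverse $T_{1/(1-w\overline z)}$ of $1-wU$ on $\Rat(D)$ and checking that the relevant kernels lie in $\Rat(D)$. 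Relatedly, for the interior invariant the paper works with $\ker(U_r^*-w)^k$ rather than with eigenvectors of $U_r$ itself; since $U^*-w$ is injective, the entire kernel of the powers comes from the rank-one perturbation, and the $\min$ with $\ord_w(\phi)$ drops out cleanly for every $k$. Your route through the point spectrum of $U_r$, which generically fills the open disc, would need additional bookkeeping to isolate that $\min$ as a similarity invariant beyond $k=1$.
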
 
The proof of (ii) $\Rightarrow$ (i) relies on the construction of certain  intertwiners between the operators $U+r\otimes \phi$, with $r$ varying over $\Rat(D)$ and $\phi\in H^2$ fixed. In turn, these intertwiners 
are described in terms of a ``twisted" multiplication on the rational functions. More specifically, 
define on $\Rat(D)$ 
the binary operation
\[
r\times s=zrT_{\overline{\phi}}(s)+zsT_{\overline{\phi}}r-T_{\overline{\phi}}(zrs).
\]
Here $z\colon \T\to\T$ denotes the identity function and $T_{\overline{\phi}}$ is the co-analytic Toeplitz operator with symbol $\overline{\phi}$. It is easy to check that $r\times s$ is again an element of $\Rat(D)$. It is not at all clear that the operation
$\times$ is associative, but it will be shown below that this is  the case (Section \ref{intertwiners}). Thus, $\Rat(D)$ becomes an algebra under the multiplication $\times$ (and standard addition and scalar multiplication).  For each $r\in \Rat(D)$, define $K_r\colon H^2\to H^2$ by $K_r f=r\times f$. The operators $I-K_r$
are intertwiners between perturbations of $U$: 
\[
(I-K_r)(U+s\otimes \phi)=(U+(r\circ s)\otimes \phi)(I-K_r).
\]
Here $r\circ s:=r+s-r\times s$ is the operation of circle composition in the algebra $(\Rat(D),\times)$. The proof of Theorem \ref{main} (ii) $\Rightarrow$ (i)  passes through an analysis of the algebra $(\Rat(D),\times)$
and in particular of its circle invertible elements (Theorem \ref{th:timesalgebra} and Corollary \ref{cor:circlegroup}).
On the other hand, the implication (i) $\Rightarrow$ (ii)  follows from a rather straightforward spectral analysis (Section \ref{pointspectrum}).

\section{Intertwiners}\label{intertwiners}
Let us start by fixing some notation. For each $|w|<1$, let $k_w=1/(1-\overline{w}z)$. 
If $f\in L_2(\T)$, we shall always understand by $f(w)$ the evaluation on $w$ of the harmonic extension of $f$ to $D$, i.e., $f(w)=\langle fk_w,k_w\rangle $.

Let $P_+\colon L_2(\T)\to H^2$ denote the orthogonal projection.
We denote by $T_f$ the Toeplitz operator on $H^2$ with symbol $f\in L_2(\T)$, i.e.,
$T_fg=P_+(fg)$. If $f$ is unbounded then   $T_f$ is only densely defined (say, on $\Rat(D)$). However,  in this case we will  find it useful  to regard $T_f$ as a continuous operator from $H^2$ to the space $\mathcal H(D)$ of analytic functions in the interior of $D$, endowed with the topology of uniform convergence on compact sets (see \cite[(IV-12)]{sarason}).  The operator $P_+\colon L_2(\T)\to \mathcal H(D)$ is then taken to mean 
$P_+(f)(w):=\langle f,k_w\rangle$, for $|w|<1$.

For $|w|<1$ and $n=0,1,\dots$, let
\[
k_w^{(n)}=\frac{n!z^n}{(1-\overline{w}z)^{n+1}}.
\]
Observe that $k_w^{(n)}=\frac{d^n}{d\overline w^n} k_w$, i.e., $k_w^{(n)}$ is the $n$-th derivative of $k_w$ with respect to the parameter $\overline{w}$. 
Let $\mathcal S_w$ denote the linear span of  $\{k_w^{(0)},k_w^{(1)},\dots\}$. The decomposition of a rational function into simple fractions implies that
\[
\Rat(D)=\bigoplus_{|w|<1}\mathcal S_w.
\]

Let $\phi\in H^2$ and $|w|<1$. We have the following
formula for evaluating the Toeplitz operator $T_{\overline\phi}$ on $k_w^{(n)}$:
\begin{align}\label{evalTphi}
T_{\overline\phi}(k_w^{(n)})=\frac{d^n}{d\overline w^n}(\overline{\phi(w)}k_w).
\end{align}
This formula is deduced from the case $n=0$--which is well known--by repeatedly differentiating
with respect to $\overline{w}$. From this formula
we see that $\mathcal S_w$ and $\Rat(D)$ are both invariant by $T_{\overline\phi}$. It follows that
if $r,s\in \Rat(D)$ then 
\[
r\times s:=zrT_{\overline\phi}(s)+zs T_{\overline\phi}(r)-T_{\overline\phi}(zrs)
\]
is also in $\Rat(D)$.

Let $r\in \Rat(D)$. Define $K_r f=r \times f$, with $f\in H^2$. Observe that we can make sense of $r\times f$
as a function in $\mathcal H(D)$, bearing in mind the convention stated above for the evaluation of Toeplitz operators with unbounded symbol. Nevertheless, we will show shortly that $K_r$ is in fact a bounded operator on $H^2$.

For each $|w|<1$ and $f\in H^2$, let $\Gamma_-(w;f):=\langle  \frac{\phi}{ w-z},f\rangle$.
\begin{lemma}\label{lemma:deltaformula}
Let $|w|<1$, $n\in \N$, and $f\in H^2$. Then
\[
k_w^{(n)}\times f=zT_{\overline{\phi}}(k_w^{(n)})f+
\frac{d^m}{d\overline w^m}(\overline{\Gamma_-( w;f)}\cdot k_w).
\] 
\end{lemma}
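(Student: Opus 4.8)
The plan is to unwind the definition of $\times$, reduce the claim to a single semicommutator identity for $T_{\overline\phi}$, verify that identity at $n=0$ by a direct projection computation, and then lift it to arbitrary $n$ by differentiating in the parameter $\overline w$. First I would set $r=k_w^{(n)}$ and $s=f$ in the definition of $r\times s$, obtaining
\[
k_w^{(n)}\times f=z k_w^{(n)}\,T_{\overline\phi}(f)+zf\,T_{\overline\phi}(k_w^{(n)})-T_{\overline\phi}\big(z k_w^{(n)}f\big).
\]
The middle summand $zf\,T_{\overline\phi}(k_w^{(n)})$ is exactly the product $z\,T_{\overline\phi}(k_w^{(n)})\,f$ displayed in the statement, so it cancels and the lemma reduces to the identity
\[
z k_w^{(n)}\,T_{\overline\phi}(f)-T_{\overline\phi}\big(z k_w^{(n)}f\big)=\frac{d^{n}}{d\overline w^{n}}\big(\overline{\Gamma_-(w;f)}\,k_w\big).\qquad(\star)
\]

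Second I would establish $(\star)$ for $n=0$. Writing $g=z k_w\in H^\infty$ and $Q=I-P_+$, the fact that $g$ is analytic gives $T_{\overline\phi}(gf)=P_+(g\overline\phi f)=g\,P_+(\overline\phi f)+P_+\big(g\,Q(\overline\phi f)\big)$, hence the semicommutator
\[
g\,T_{\overline\phi}(f)-T_{\overline\phi}(gf)=-P_+\big(g\,Q(\overline\phi f)\big);
\]
only the anti-analytic part $Q(\overline\phi f)$ survives. Expanding $Q(\overline\phi f)=\sum_{m\geq1}c_{-m}z^{-m}$ and $g=z k_w=\sum_{p\geq1}\overline w^{p-1}z^{p}$ in Fourier series, a short summation collapses the projection to a multiple of $k_w$, namely $P_+\big(g\,Q(\overline\phi f)\big)=\big(\sum_{m\geq1}c_{-m}\overline w^{m-1}\big)k_w$. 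Finally, inserting the expansion $\tfrac1{w-z}=-\sum_{n\geq0}w^{n}\bar z^{n+1}$ (valid on $\T$ for $|w|<1$) into $\Gamma_-(w;f)=\langle\frac{\phi}{w-z},f\rangle$ identifies the coefficient: $\overline{\Gamma_-(w;f)}=-\sum_{m\geq1}c_{-m}\overline w^{m-1}$. Combining the last three displays yields $(\star)$ at $n=0$.

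Third I would deduce the general case by applying $\frac{d^{n}}{d\overline w^{n}}$ to the $n=0$ instance of $(\star)$. Since $k_w^{(n)}=\frac{d^{n}}{d\overline w^{n}}k_w$, while $f$, multiplication by $z$, and the operator $T_{\overline\phi}$ do not depend on $w$, differentiating the left-hand side $n$ times produces exactly $z k_w^{(n)}T_{\overline\phi}(f)-T_{\overline\phi}(z k_w^{(n)}f)$, and the right-hand side becomes $\frac{d^{n}}{d\overline w^{n}}\big(\overline{\Gamma_-(w;f)}\,k_w\big)$, which is $(\star)$ in general.

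The main obstacle is twofold. In the $n=0$ step one must track the extra factor $z$ in $g=z k_w$—it shifts the summation index and is precisely what converts the $k_w$-coefficients into the powers $\overline w^{m-1}$ occurring in $\overline{\Gamma_-}$—together with the several conjugations, so that the coefficient lands on $\overline{\Gamma_-(w;f)}$ and not on a neighbouring expression. The subtler issue is justifying the interchange of $\frac{d^{n}}{d\overline w^{n}}$ with $T_{\overline\phi}$ in the third step: as $\overline\phi$ may be unbounded, $T_{\overline\phi}$ is only the continuous map $H^2\to\mathcal H(D)$ fixed at the start of the section, so one argues that $\overline w\mapsto z k_w f$ is a holomorphic $\mathcal H(D)$-valued map and that the continuous linear map $T_{\overline\phi}$ commutes with the parameter derivatives—the same mechanism used to obtain \eqref{evalTphi}.
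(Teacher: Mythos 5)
Your proposal is correct and follows essentially the same route as the paper: expand the definition of $\times$, reduce to the semicommutator $zk_w^{(n)}T_{\overline\phi}(f)-T_{\overline\phi}(zk_w^{(n)}f)=P_+\bigl(k_w^{(n)}\,z(P_+-I)(\overline\phi f)\bigr)$, observe that only the anti-analytic part of $\overline\phi f$ survives and evaluates to $\overline{\Gamma_-(w;f)}$, and obtain general $n$ by differentiating in $\overline w$ (the paper, like you, invokes the same mechanism used for \eqref{evalTphi}). The only cosmetic difference is that you verify the $n=0$ reproducing identity by an explicit Fourier-series summation where the paper cites the well-known formula $P_+(k_w\tilde f)=\tilde f(w)k_w$ for conjugate-analytic $\tilde f$ and computes $\tilde f(w)$ by an inner product.
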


\begin{proof}
We have 
\begin{align*}
k_w^{(n)} \times f &=zT_{\overline{\phi}}(k_w^{(n)})f+zk_w^{(n)}T_{\overline{\phi}}(f)-T_{\overline\phi}(zfk_w^{(n)}).
\end{align*}
The first term on the right hand side is already present in the desired formula. Thus, we must deal with the other two terms. 
We have  
\[
zk_w^{(n)}T_{\overline{\phi}}(f)-T_{\overline\phi}(zfk_w^{(n)})=zk_w^{(n)}P_+(\overline{\phi}f)  - P_+(\overline{\phi}zk_w^{(n)}f)=P_+(k_w^{(n)}z(P_+ - I)(\overline{\phi}f)).
\] 
Set $z(P_+ - I)(\overline{\phi}f)=\tilde f$, so that $zk_w^{(n)}T_{\overline{\phi}}(f)-T_{\overline\phi}(zfk_w^{(n)})=P_+(k_w^{(n)}\tilde f)$. Observe that $\tilde f \perp zH^2$. So,
the harmonic extension of $\tilde f$ to the unit disc is  conjugate analytic, i.e., analytic in $\overline w$. By the same argument used in the derivation of \eqref{evalTphi}, we have $P_+(k_w^{(n)}\tilde f)=\frac{d^n}{d\overline w^n}(\tilde f(w)k_w)$.
On the other hand,
\[
\tilde f(w)=\langle \tilde f,\frac{1}{1-\overline z w}\rangle
=\langle z(P_+ - I)(\overline{\phi}f),\frac{1}{1-\overline z w}\rangle=
\langle f,\frac{\phi}{w-z}\rangle=\overline {\Gamma_-(w;f)}.
\] 
This proves the lemma.
\end{proof}

\begin{proposition}\label{prop:Kr}
$K_r\colon H^2\to H^2$ is a bounded operator  which is a perturbation of the analytic Toeplitz operator with symbol 
$zT_{\overline{\phi}}r$ by a finite rank operator. 
\end{proposition}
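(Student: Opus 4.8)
The plan is to reduce the computation of $K_r f = r\times f$ to the single-pole case already handled by Lemma \ref{lemma:deltaformula}, and then to read off the two pieces. Since the operation $\times$ is visibly bilinear (indeed symmetric) in its two arguments, and since any $r\in\Rat(D)$ has a finite partial-fraction expansion $r=\sum_{j=1}^N c_j k_{w_j}^{(n_j)}$ coming from the decomposition $\Rat(D)=\bigoplus_{|w|<1}\mathcal S_w$, I would first write
\[
K_r f = r\times f = \sum_{j=1}^N c_j\,\bigl(k_{w_j}^{(n_j)}\times f\bigr),
\]
and then apply Lemma \ref{lemma:deltaformula} to each summand. This splits $K_r f$ into two groups of terms, which I treat separately.

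The first group is $\sum_j c_j\, zT_{\overline\phi}(k_{w_j}^{(n_j)})\,f$, which by linearity of $T_{\overline\phi}$ equals $zT_{\overline\phi}(r)\cdot f$, i.e.\ pointwise multiplication by the single function $zT_{\overline\phi}(r)$. The key observation is that this multiplier is a bounded analytic function: by \eqref{evalTphi} the space $\Rat(D)$ is invariant under $T_{\overline\phi}$, so $T_{\overline\phi}(r)\in\Rat(D)$, whence $zT_{\overline\phi}(r)\in\Rat(D)\subset H^\infty$. Multiplication by such a function is precisely the bounded analytic Toeplitz operator $T_{zT_{\overline\phi}r}$ that the proposition compares $K_r$ with.

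The second group collects the terms $\sum_j c_j \frac{d^{n_j}}{d\overline w^{n_j}}\bigl(\overline{\Gamma_-(w;f)}\,k_w\bigr)\big|_{w=w_j}$, and here I would argue finite rank. Expanding each via the Leibniz rule produces a finite sum of terms of the form $\bigl(\frac{d^m}{d\overline w^m}\overline{\Gamma_-(w;f)}\big|_{w_j}\bigr)\,k_{w_j}^{(n_j-m)}$. For fixed $w_j$ the vector $k_{w_j}^{(n_j-m)}\in\Rat(D)\subset H^2$ is a fixed element, while $f\mapsto \frac{d^m}{d\overline w^m}\overline{\Gamma_-(w;f)}\big|_{w_j}$ is a bounded linear functional on $H^2$: since $\overline{\Gamma_-(w;f)}=\langle f,\tfrac{\phi}{w-z}\rangle$ depends analytically on $\overline w$, differentiating gives $\langle f,\tfrac{(-1)^m m!\,\phi}{(w_j-z)^{m+1}}\rangle$, whose kernel lies in $L_2(\T)$ because $\tfrac{1}{(w_j-z)^{m+1}}$ is bounded on $\T$ (as $|w_j|<1$). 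Thus each summand is a rank-one operator and the whole group is finite rank, hence bounded.

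Combining the two groups yields $K_r = T_{zT_{\overline\phi}r} + F$ with $F$ finite rank, which is simultaneously bounded (so that, in particular, $K_r$ genuinely maps $H^2$ into $H^2$, not merely into $\mathcal H(D)$) and the asserted finite-rank perturbation. The step requiring the most care is the second group: one must verify that the functionals produced by the $\overline w$-derivatives are bounded on $H^2$ and that only finitely many of them occur. Both follow from the finiteness of the partial-fraction expansion of $r$ together with the boundedness of $\tfrac{1}{(w_j-z)^{m+1}}$ on the circle; the first group is essentially immediate once $T_{\overline\phi}$ is known to preserve $\Rat(D)$.
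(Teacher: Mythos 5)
Your proof is correct and follows essentially the same route as the paper: reduce by linearity and partial fractions to the case $r=k_w^{(n)}$, apply Lemma \ref{lemma:deltaformula}, identify the first term as the analytic Toeplitz operator with the bounded symbol $zT_{\overline\phi}(r)\in\Rat(D)\subset H^\infty$, and recognize the remaining terms as finitely many rank-one operators built from bounded functionals. You merely supply more detail than the paper does (the Leibniz expansion and the explicit $L_2(\T)$ kernels $\frac{(-1)^m m!\,\phi}{(w_j-z)^{m+1}}$ witnessing boundedness of the functionals), all of which is accurate.
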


\begin{proof}
We may reduce ourselves to the case that $r=k_w^{(n)}$ for some $|w|<1$ and $n\in \N$ (since these functions span $\Rat(D)$).
In this case, the proposition follows from the previous lemma. Indeed, observe that $f\mapsto zT_{\overline{\phi}}(k_w^{(n)})f$
is a Toeplitz operator with symbol $zT_{\overline{\phi}}(k_w^{(n)})$ and that $f\mapsto \frac{d^i}{d\overline w^i}\overline{\Gamma_-(w;f)}$ is a bounded linear functional for all $i=0,1,2,\dots$.
\end{proof}

\begin{proposition}
Let $r\in \Rat(D)$. Then 
\begin{align}\label{Kcommutator}
UK_r-K_r U &=r\otimes \phi,\\
K^*_r(\phi) &=0.\label{Kadjoint}
\end{align}
Furthermore, these two equations determine $K_r$
uniquely for given $r\in \Rat(D)$ and $\phi\in H^2$.
\end{proposition}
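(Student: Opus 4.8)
The plan is to establish both identities by direct computation on a dense subspace and then read off uniqueness from the commutant of the backward shift. Since $U$, $K_r$ (bounded by Proposition \ref{prop:Kr}), and $r\otimes\phi$ are all bounded, it suffices to verify \eqref{Kcommutator} on the dense subspace $\Rat(D)$, where $r\in H^\infty$ and hence every product and every image $T_{\overline\phi}(\cdot)$ below is a genuine $H^2$ function, so the formal manipulations are legitimate; the identity then extends to all of $H^2$ by continuity. For the commutator I would compute $UK_rf$ and $K_rUf$ separately and subtract, using the convention $(r\otimes\phi)f=\langle f,\phi\rangle r$.

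The elementary ingredients are $U(zh)=h$ and $zUh=h-h(0)$ for analytic $h$, together with the Toeplitz relations $UT_{\overline\phi}=T_{\bar z}T_{\overline\phi}=T_{\overline{z\phi}}$ (valid because $\bar z$ is co-analytic) and the commutation $UT_{\overline\phi}=T_{\overline\phi}U$ (valid since $P_+(\overline\phi\,\bar z)=0$). Writing $g=T_{\overline\phi}r$ and $g_f=T_{\overline\phi}f$, the first relation gives $U T_{\overline\phi}(zrf)=T_{\overline\phi}(rf)$, and one finds
\[
UK_rf=rT_{\overline\phi}f+fg-T_{\overline\phi}(rf).
\]
In $K_rUf$ the substitution $zUf=f-f(0)$ reproduces exactly these same three terms, plus a single extra scalar contribution $-\langle f,\phi\rangle r$ coming from $g_f(0)=\langle f,\phi\rangle$. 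Subtracting leaves $UK_rf-K_rUf=\langle f,\phi\rangle r=(r\otimes\phi)f$, which is \eqref{Kcommutator}. I expect the bookkeeping of the constant-term corrections $f(0)$ and $g_f(0)$, together with the use of $UT_{\overline\phi}=T_{\overline\phi}U$ for the possibly unbounded symbol $\overline\phi$, to be the main obstacle: this is precisely where the rank-one term is produced, and a sign or projection slip here collapses the whole computation.

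For \eqref{Kadjoint} I would show $\langle K_rf,\phi\rangle=0$ for all $f$, which is equivalent to $K_r^*\phi=0$. By linearity it suffices to take $r=k_w^{(n)}$, where Lemma \ref{lemma:deltaformula} applies. Pairing its two terms with $\phi$ and using $\langle k_w,\phi\rangle=\overline{\phi(w)}$ together with the identity $\langle zk_wf,\phi\rangle=-\overline{\Gamma_-(w;f)}$ (which follows by writing $\overline{\Gamma_-(w;f)}=\langle f,\phi/(w-z)\rangle$ and using $1/(\bar w-\bar z)=-zk_w$ on $\T$), the case $n=0$ yields $-\overline{\phi(w)}\,\overline{\Gamma_-(w;f)}+\overline{\Gamma_-(w;f)}\,\overline{\phi(w)}=0$. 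The general case follows by applying $d^n/d\overline w^n$, since differentiation in $\overline w$ commutes with the pairing against the fixed vector $\phi$.

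Finally, for uniqueness suppose $K$ and $K'$ both satisfy \eqref{Kcommutator} and \eqref{Kadjoint}, and set $L=K-K'$. Then $L$ commutes with $U=T_z^*$, so $L^*$ commutes with the shift $T_z$; hence $L^*=T_b$ for some $b\in H^\infty$ and $L=T_{\bar b}$. The condition $L^*\phi=T_b\phi=b\phi=0$ then forces $b=0$ provided $\phi\neq0$, so $L=0$. (When $\phi=0$ the operator $K_r$ vanishes and uniqueness is vacuous, so $\phi\neq0$ is assumed throughout.)
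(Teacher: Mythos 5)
Your proof is correct, and it reaches the two identities by a somewhat different route than the paper, which only sketches this ``straightforward but cumbersome'' verification. For the commutator, the paper decomposes $K_r=T_{zr}T_{\overline\phi}+(T_{zT_{\overline\phi}r}-T_{zr\overline{\phi}})$, observes that the second summand commutes with $U$, and gets the rank-one term from the standard identity $UT_l-T_lU=(Ul)\otimes 1$ applied to $l=zr$; you instead expand $UK_rf$ and $K_rUf$ term by term on $\Rat(D)$ using $U(zh)=h$ and $zUh=h-h(0)$, and your bookkeeping is right: the $f(0)T_{\overline\phi}(r)$ corrections cancel and the surviving $(T_{\overline\phi}f)(0)=\langle f,\phi\rangle$ produces exactly $(r\otimes\phi)f$. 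The paper's decomposition is slicker and immediately exhibits $K_r$ as Toeplitz-plus-commutant, which is reused later (e.g.\ for the symbol map $\gamma_+$); your direct computation is more elementary and makes the origin of the rank-one term transparent. For \eqref{Kadjoint}, the paper computes $K_r^*f=\phi P_+(\overline{zr}f)-P_+(\overline{zr}\phi)f$ explicitly and sets $f=\phi$, whereas you verify $\langle K_rf,\phi\rangle=0$ by reducing to $r=k_w^{(n)}$ and pairing the two terms of Lemma \ref{lemma:deltaformula} against $\phi$; your key identity $\langle zk_wf,\phi\rangle=-\overline{\Gamma_-(w;f)}$ checks out, and differentiating in $\overline w$ under the (continuous) pairing legitimately handles $n\geq 1$. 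The uniqueness argument is identical to the paper's. One small point worth tightening: your parenthetical justification of $UT_{\overline\phi}=T_{\overline\phi}U$ (``since $P_+(\overline\phi\,\bar z)=0$'') is not the real reason; the clean statement is that co-analytic Toeplitz operators commute, i.e.\ $T_{\bar z}T_{\overline\phi}=T_{\overline{z\phi}}=T_{\overline\phi}T_{\bar z}$, which on $\Rat(D)$ can also be read off from \eqref{evalTphi}.
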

\begin{proof}

The verification  of \eqref{Kcommutator} and \eqref{Kadjoint} is straightforward, although somewhat cumbersome. We will sketch the computations here and leave the details to the reader:
The following formula is well known and easily established: $UT_l-T_lU=(Ul)\otimes 1$ for all $l\in H^2$. 
Thus, 
\[
UT_{zr}T_{\overline\phi}-T_{zr}T_{\overline\phi}U=(UT_{zr}-T_{zr}U)T_{\overline\phi}=r\otimes \phi.
\]
It can be shown by  a similar computation   that the operator $T_{zT_{\overline \phi}r}-T_{ zr\overline{\phi}}$
commutes with $U$. Since $K_r=T_{zr}T_{\overline\phi}+(T_{zT_{\overline \phi}r}-T_{zr\overline{\phi}})$,
we get \eqref{Kcommutator}.


In order to prove  \eqref{Kadjoint}, we first compute that
$K_r^*f=\phi P_+(\overline{zr}f)-P_+(\overline{zr}\phi)f$, for all $f\in H^2$.
Then  $K_r\phi=\phi P_+(\overline{zr}\phi) -P_+(\overline{zr}\phi) \phi=0$.

Finally, let us show that \eqref{Kcommutator} and \eqref{Kadjoint}  determine $K_r$ uniquely. Suppose that
$K'$ is a bounded operator that satisfies these equations. Then $C:=K'-K_r$ commutes with $U$ and satisfies $C^*\phi=0$. Since  $C$ commutes with $U$, we have $C=T_{\overline l}$, with $l\in H^\infty$. So $C^*=T_l$ is multiplication by $l$. But then we cannot have $C^*\phi=0$
unless $l=0$ (since $\phi\neq 0$). We conclude that $C=0$, i.e., $K'=K_r$.
\end{proof}

\begin{proposition}\label{prop:timesalgebra}
$\Rat(D)$ is a commutative algebra under the multiplication $\times$ and standard addition and scalar multiplication. The map  $r\mapsto K_r$ is a representation of this algebra
by operators acting on $H^2$. 
\end{proposition}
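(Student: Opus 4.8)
The plan is to treat the three assertions packed into the proposition---commutativity, associativity, and the multiplicativity of $r\mapsto K_r$---in increasing order of difficulty, and to let the hardest one follow from the uniqueness clause established in the preceding proposition rather than from a direct manipulation of $\times$.

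First I would dispose of the two cheap points. Commutativity is immediate: the defining expression $r\times s=zrT_{\overline\phi}(s)+zsT_{\overline\phi}(r)-T_{\overline\phi}(zrs)$ is symmetric in $r$ and $s$, since pointwise multiplication is commutative, so $T_{\overline\phi}(zrs)=T_{\overline\phi}(zsr)$. Likewise the formula is linear separately in $r$ and in $s$ because $T_{\overline\phi}$ is linear; in particular each $K_r$ is a linear operator (bounded, by Proposition \ref{prop:Kr}) and $r\mapsto K_r$ is a linear map into the bounded operators on $H^2$. After these observations, the only substantive claims left are that $\times$ is associative and that $K_{r\times s}=K_rK_s$.

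The key step is multiplicativity, and the idea is to verify it \emph{via uniqueness} rather than by computing $(r\times s)\times f$ directly. The preceding proposition asserts that the two relations \eqref{Kcommutator} and \eqref{Kadjoint} determine the intertwiner associated with a given rational function uniquely; since $K_rK_s$ is a bounded operator, it suffices to check that $K_rK_s$ satisfies these relations with $r\times s$ in the role of the rational function. For the commutator I would write
\[
UK_rK_s-K_rK_sU=(UK_r-K_rU)K_s+K_r(UK_s-K_sU)=(r\otimes\phi)K_s+K_r(s\otimes\phi),
\]
using \eqref{Kcommutator} twice, and then apply the standard rank-one identities $(a\otimes b)T=a\otimes T^{*}b$ and $T(a\otimes b)=(Ta)\otimes b$. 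The first term becomes $r\otimes(K_s^{*}\phi)$, which vanishes by \eqref{Kadjoint}, while the second becomes $(K_rs)\otimes\phi=(r\times s)\otimes\phi$. For the adjoint relation, $(K_rK_s)^{*}\phi=K_s^{*}(K_r^{*}\phi)=0$, again by \eqref{Kadjoint}. Thus $K_rK_s$ satisfies both defining equations for the intertwiner attached to $r\times s$, and uniqueness forces $K_rK_s=K_{r\times s}$.

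Finally, associativity---the point flagged in the introduction as ``not at all clear''---drops out for free once multiplicativity is in hand. Evaluating the operator identity $K_{r\times s}=K_rK_s$ on an arbitrary $t\in\Rat(D)\subseteq H^2$ gives $(r\times s)\times t=K_{r\times s}(t)=K_rK_s(t)=K_r(s\times t)=r\times(s\times t)$, which is exactly the associativity law. I expect the main obstacle to be the multiplicativity step, or rather the realization that it should be proved through the intrinsic characterization \eqref{Kcommutator}--\eqref{Kadjoint}; the only delicate point in that step is that the rank-one operator $r\otimes\phi$ annihilates $K_s$ on the left precisely because $K_s^{*}\phi=0$, which is what makes the commutator of $K_rK_s$ collapse to the single clean term $(r\times s)\otimes\phi$.
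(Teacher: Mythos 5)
Your proof is correct and follows essentially the same route as the paper: the paper likewise deduces $K_rK_s=K_{r\times s}$ from the uniqueness clause of the preceding proposition by checking the commutator and adjoint relations, and then reads off associativity by evaluating on elements of $\Rat(D)$. You have merely written out the rank-one manipulations that the paper declares ``easily verified,'' and they check out.
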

\begin{proof}
It is clear that $\times$ is bilinear and commutative. Let us show that it is associative.
It is easily verified, using \eqref{Kcommutator} and \eqref{Kadjoint}, that the operators 
 $K_r K_s$  and $K_{r\times s}$ have the same commutator with $U$ (equal to $(r\times s)\otimes \phi$) and that their adjoints vanish at $\phi$. We conclude by the previous proposition
 that $K_r K_s=K_{r\times s}$. This means that  $r\times(s\times f)=(r\times s)\times f$ for all $f\in H^2$. In particular,
$\times$ is associative. Thus, $(\mathcal R(D),\times)$ is a commutative algebra over $\C$.
Since $K_r$ depends linearly on $r$,  $r\mapsto K_r$ is an algebra
homomorphism. 
\end{proof}

We will use the notation  $\Rat^\times(D)$ to refer to $\Rat(D)$ regarded as an algebra under $\times$. 
Observe that for $\phi=1$ we get $r\times s=zrs$, and so $r\mapsto zr$ is an isomorphism
from $\Rat^\times(D)$ to $z\Rat(D)$ (where the latter is endowed with the standard multiplication).
In the next section we will elucidate the structure of $\Rat^\times(D)$ for an arbitrary $\phi$.

Consider on $\Rat^\times(D)$ the binary operation
\[
r\circ s=r+s-r\times s.
\]
\begin{proposition}
We have
\begin{align*}
(I-K_r)(U+s\otimes \phi)=(U+(r\circ s)\otimes \phi)(I-K_r).
\end{align*}
\end{proposition}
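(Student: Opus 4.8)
The plan is to expand both sides of the claimed identity, cancel the obvious terms, and then reduce the remaining discrepancy to a sum of rank one operators that collapses by the structural facts already in hand. Writing $I-K_r$ on the appropriate side and distributing, the left-hand side becomes $U+s\otimes\phi-K_rU-K_r(s\otimes\phi)$, while the right-hand side becomes $U+(r\circ s)\otimes\phi-UK_r-((r\circ s)\otimes\phi)K_r$. Subtracting, the two copies of $U$ cancel, and the pair $-K_rU$ and $+UK_r$ combine, via the commutator relation \eqref{Kcommutator}, into $r\otimes\phi$. Hence the difference of the two sides is already a sum of rank one operators, and all that remains is to verify that this sum is zero.

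The next step is to simplify the two mixed terms $K_r(s\otimes\phi)$ and $((r\circ s)\otimes\phi)K_r$ using how $K_r$ interacts with a rank one operator on either side. Since $K_r$ is linear and $K_r s=r\times s$, precomposition is absorbed into the first factor: $K_r(s\otimes\phi)=(K_r s)\otimes\phi=(r\times s)\otimes\phi$. Postcomposition, on the other hand, transfers to the second factor through the adjoint, $((r\circ s)\otimes\phi)K_r=(r\circ s)\otimes(K_r^*\phi)$, which vanishes by \eqref{Kadjoint}. After these two substitutions, the difference of the two sides equals $s\otimes\phi+r\otimes\phi-(r\times s)\otimes\phi-(r\circ s)\otimes\phi$.

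Finally, I would substitute the definition $r\circ s=r+s-r\times s$, so that by linearity of the rank one construction in its first argument $(r\circ s)\otimes\phi=r\otimes\phi+s\otimes\phi-(r\times s)\otimes\phi$. Plugging this in cancels the four surviving terms exactly, which establishes the identity. There is no deep obstacle here; the argument is a direct algebraic verification built on \eqref{Kcommutator}, \eqref{Kadjoint}, and the relation $K_r s=r\times s$. The only point requiring genuine care is the bookkeeping of how $K_r$ composes with the rank one operators on the two sides, and in particular recognizing that postcomposition produces the factor $K_r^*\phi$ that is precisely annihilated by \eqref{Kadjoint}. That single cancellation is exactly what pins down $r\circ s$ as the correct composition law rendering $I-K_r$ an intertwiner.
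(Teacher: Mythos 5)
Your proposal is correct and follows exactly the route the paper intends: the paper's proof is the one-line remark that the identity ``follows at once from \eqref{Kcommutator} and \eqref{Kadjoint},'' and your computation simply carries out that expansion, with the absorption $K_r(s\otimes\phi)=(r\times s)\otimes\phi$, the annihilation $((r\circ s)\otimes\phi)K_r=(r\circ s)\otimes K_r^*\phi=0$, and the cancellation forced by $r\circ s=r+s-r\times s$ all handled correctly.
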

\begin{proof} This follows at once from \eqref{Kcommutator} and \eqref{Kadjoint}. 
\end{proof}

The preceding proposition  implies that if $I-K_r$ is invertible then $U+r\otimes \phi$
and $U+(r\circ s)\otimes \phi$ are similar. We have $(I-K_r)(I-K_s)=I-K_{r\circ s}$
and $I-K_0=I$. So, if $r$ is  an invertible element  of $\Rat^\times(D)$ with respect to  $\circ$ (where 0
is the neutral element)--i.e.,  $r\circ s=0$ for some $s\in \Rat(D)$--then  $(I-K_r)(I-K_s)=I$, and so
$I-K_r$ is invertible.
In general, given a ring $R$ the operation $a\circ b\mapsto a+b-ab$ is called
circle composition and the collection of invertible elements with respect to this
operation is called the circle group of the ring.  We arrive to the following corollary:

\begin{corollary}\label{cor:circlesimilar}
Let $r,s\in \Rat(D)$.
If there exists a circle invertible element $t\in \Rat^\times(D)$ such that $r\circ t=s$ then
$U+r\otimes \phi$ and $U+s\otimes \phi$ are similar.
\end{corollary}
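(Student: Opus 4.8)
The plan is to use the single operator $I-K_t$ as the similarity, so that the corollary reduces to combining two facts already assembled above: the intertwining identity and the invertibility of $I-K_t$ for circle invertible $t$. No new construction is needed; the work is just bookkeeping.

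First I would record that the operation $\circ$ is commutative. Since the algebra $\Rat^\times(D)$ is commutative by Proposition \ref{prop:timesalgebra}, we have $r\times t=t\times r$, and hence the formula $r\circ t=r+t-r\times t$ is symmetric in its arguments. Thus the hypothesis $s=r\circ t$ can be rewritten as $s=t\circ r$. This is the one (mild) point to be careful about, since the intertwining proposition is not manifestly symmetric in the two variables.

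Next I would instantiate the intertwining proposition with $t$ in the role of the index of the intertwiner and $r$ in the role of the perturbation, obtaining
\[
(I-K_t)(U+r\otimes\phi)=(U+(t\circ r)\otimes\phi)(I-K_t)=(U+s\otimes\phi)(I-K_t),
\]
the second equality being the rewriting from the previous step. This already exhibits $I-K_t$ as an operator intertwining $U+r\otimes\phi$ and $U+s\otimes\phi$.

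It then remains only to check that $I-K_t$ is invertible, and this is exactly what circle invertibility of $t$ provides, as noted in the discussion preceding the corollary: choosing $u\in\Rat(D)$ with $t\circ u=0$ and using $K_tK_u=K_{t\times u}$ together with the commutativity of $\times$, one gets $(I-K_t)(I-K_u)=(I-K_u)(I-K_t)=I-K_{t\circ u}=I-K_0=I$, so $I-K_u$ is a two-sided inverse of $I-K_t$. Conjugating $U+r\otimes\phi$ by the invertible operator $I-K_t$ therefore yields $U+s\otimes\phi$, which is the asserted similarity. I do not expect a genuine obstacle here: all the substantive content has been front-loaded into the intertwining identity and the homomorphism property $K_rK_s=K_{r\times s}$, so the proof is essentially a two-line assembly, with the only subtlety being the inheritance of commutativity by $\circ$.
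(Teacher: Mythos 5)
Your proof is correct and follows essentially the same route as the paper: the similarity is implemented by $I-K_t$, whose invertibility follows from $(I-K_t)(I-K_u)=I-K_{t\circ u}=I$ for a circle inverse $u$ of $t$, combined with the intertwining identity. Your explicit remark that commutativity of $\times$ makes $\circ$ commutative (so that $r\circ t=t\circ r=s$ matches the form of the intertwining proposition) is a small bookkeeping point the paper leaves implicit, but it is handled correctly.
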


\section{The algebra $\Rat^\times(D)$}\label{timesalgebra}
In this section we elucidate the structure of the algebra $\Rat^\times (D)$. We then show that 
$r\circ t=s$, for some circle invertible  $t$,  if and only if the conditions of Theorem \ref{main} (ii) hold.
Together with Corollary \ref{cor:circlesimilar}, this proves Theorem \ref{main} (ii)$\Rightarrow$(i).

\begin{lemma}
The map $\gamma_+\colon \Rat^\times(D)\to \Rat(D)$ defined by $\gamma_+(r)=zT_{\overline\phi}(r)$ is an algebra homomorphism (where $\Rat(D)$ is endowed with the standard multiplication).
\end{lemma}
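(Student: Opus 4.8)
The plan is to avoid a direct symbol computation and instead exploit the representation $r\mapsto K_r$ together with the structural description of $K_r$ already obtained. Linearity of $\gamma_+$ is immediate, since $r\mapsto zT_{\overline\phi}(r)$ is a composition of linear maps. The substance of the lemma is the identity $\gamma_+(r\times s)=\gamma_+(r)\gamma_+(s)$ (standard product), and I would read this off from Proposition \ref{prop:timesalgebra} and Proposition \ref{prop:Kr}.

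First I would record that, because $\Rat(D)$ consists of rational functions with poles off the closed disc, every $\gamma_+(r)=zT_{\overline\phi}(r)$ lies in $\Rat(D)\subseteq H^\infty$; hence the analytic Toeplitz operator $T_{\gamma_+(r)}$ is bounded and is simply multiplication by $\gamma_+(r)$ on $H^2$. By Proposition \ref{prop:Kr} we may write $K_r=T_{\gamma_+(r)}+F_r$, where $F_r$ is of finite rank.

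Next I would combine this with $K_rK_s=K_{r\times s}$ from Proposition \ref{prop:timesalgebra}. Expanding the product $K_rK_s=(T_{\gamma_+(r)}+F_r)(T_{\gamma_+(s)}+F_s)$, the three terms $T_{\gamma_+(r)}F_s$, $F_rT_{\gamma_+(s)}$ and $F_rF_s$ are each of finite rank, while for analytic symbols one has $T_{\gamma_+(r)}T_{\gamma_+(s)}=T_{\gamma_+(r)\gamma_+(s)}$ (multiplication operators compose to multiplication by the product). Writing $K_{r\times s}=T_{\gamma_+(r\times s)}+F_{r\times s}$ as well, and subtracting, I conclude that
\[
T_{\gamma_+(r\times s)}-T_{\gamma_+(r)\gamma_+(s)}=T_{\gamma_+(r\times s)-\gamma_+(r)\gamma_+(s)}
\]
is a finite-rank operator.

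Finally I would invoke the elementary fact that a nonzero analytic symbol $h\in H^\infty$ gives an injective multiplication operator $T_h$ whose range contains the linearly independent set $\{h,hz,hz^2,\dots\}$, so $T_h$ has infinite rank unless $h=0$. Applying this with $h=\gamma_+(r\times s)-\gamma_+(r)\gamma_+(s)$ forces $h=0$, which is exactly multiplicativity. The only real obstacle is bookkeeping: confirming that $\gamma_+$ lands in $H^\infty$ so the Toeplitz calculus applies, and that all error terms are genuinely finite rank; everything else is formal. A direct verification of $T_{\overline\phi}(r\times s)=zT_{\overline\phi}(r)T_{\overline\phi}(s)$ by expanding the definition of $\times$ and tracking the co-analytic parts $P_-(\overline\phi\,\cdot)$ is also possible, but it is precisely this $P_-$-bookkeeping that the operator argument sidesteps.
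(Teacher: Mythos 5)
Your proof is correct and is essentially the paper's own argument: the paper likewise factors $\gamma_+$ as $r\mapsto K_r$ followed by the symbol map on the Toeplitz algebra, citing Propositions \ref{prop:Kr} and \ref{prop:timesalgebra}. You have merely unpacked why the symbol map is multiplicative (finite-rank errors plus the fact that a nonzero analytic Toeplitz operator has infinite rank), which the paper leaves implicit.
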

\begin{proof}
The map $\gamma_+$ may be viewed as the composition $r\mapsto K_r\mapsto zT_{\overline\phi}(r)$.
As shown in Proposition \ref{prop:timesalgebra}, $r\mapsto K_r$ is an algebra  homomorphism.
On the other hand, $K_r$ is an operator in the Toeplitz algebra and has symbol $zT_{\overline\phi}(r)$
(by Proposition \ref{prop:Kr}). Thus,
$K_r\mapsto zT_{\overline\phi}(r)$ is simply the symbol map. It follows that $\gamma_+$ is an algebra homomorphism.
\end{proof}
Observe that $\Gamma_+(w;r)$--as defined in the introduction--is simply the analytic extension
of $\gamma_+(r)$--as defined in the proposition above--to the interior of the unit disc. 

The range of the homomorphism $\gamma_+$ is  $z\Rat(D)$ (because $T_{\overline\phi}$ maps $\Rat(D)$ onto itself, which in turn can be deduced from \eqref{evalTphi}). So we get
a short exact sequence
\begin{align}\label{shortexact}
0\longrightarrow \ker\gamma_+\longrightarrow\Rat^\times(D)\stackrel{\gamma_+}{\longrightarrow} z\Rat(D)
\longrightarrow 0.
\end{align}
We will show below that this short exact sequence splits, and so $\Rat^\times(D)\cong \ker\gamma_+\oplus z\Rat(D)$. But first, let us investigate the ideal $\ker\gamma_+$ further.

We have $\ker \gamma_+=(\ker T_{\overline\phi})\cap \Rat(D)$.
From \eqref{evalTphi}, we see that $k_a^{(n)}\in \ker T_{\overline\phi}$ if and only if $a$ is a zero of $\phi$ of order larger than $n$. For each $|a|<1$ and $N\in \N$, let $\mathcal S_a^{N}$ denote the linear span of $k_a^{(j)}$, with $j=0,\dots,N-1$.
Then 
\begin{align}\label{kergamma}
\ker\gamma_+=\ker T_{\overline\phi}\cap \Rat(D)=\bigoplus_{\{|a|<1\mid\phi(a)=0\}} S_a^{N_a},
\end{align}
where the direct sum is taken over the zeros of $\phi$ and $N_a$ denotes the order of the zero.  
By Lemma \ref{lemma:deltaformula}, $k_a^{(m-1)}\times r\in \mathcal S_a^m$ if $a$ is a zero of $\phi$
and $m\leq N_a$. It follows that, in this case, 
$\mathcal S_a^{m}$ is an ideal of $\Rat^\times(D)$. Thus, the direct sum in \eqref{kergamma}
holds  in the ring theoretic sense, i.e., the different summands are orthogonal to each other with respect to $\times$ (indeed, $S_a^{N_a}\times S_b^{N_b}\subseteq S_a^{N_a}\cap S_b^{N_b}=\{0\}$ if $a\neq b$).

Let $a$ be a zero of $\phi$.
Let $u_a=(\frac{z-a}{1-\overline{a}z})^{N_a}$. Let $\psi\in H^2$ denote the function such that $\phi=u_a\psi$. Observe that $u_a$ and $\psi$ are relatively prime, since $a$ is a zero of order $N_a$
of $\phi$. Thus, there exists $\alpha,\beta\in H^2$ such that $\alpha\psi-u_a\beta=1$.
In fact, we can choose $\alpha$ a rational function in $(u_aH^2)^\perp$.
Define $e_a\in \mathcal S_a^{N_a}$ by $e_a=P_+(\overline{z\alpha}u_a)$.

\begin{lemma}
Let $a$ be a zero of $\phi$.
\begin{enumerate}[(i)]
\item
If $N_a\geq 2$ the map $k_a^{(N_a-2)}\mapsto x$ extends to an algebra  isomorphism  from  $\mathcal S_a^{N_a-1}$ to $\C[x]/(x^{N_a})$.
\item
The map $e_a\mapsto 1$, $k_a^{(N_a-2)}\mapsto x$ extends to an algebra isomorphism from $\mathcal S_a^{N_a}$ to the unitization of $\C[x]/(x^{N_a})$.
\end{enumerate}
\end{lemma}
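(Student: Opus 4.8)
The plan is to reduce both isomorphisms to a single explicit computation: the products $k_a^{(n)}\times k_a^{(m)}$ inside $\mathcal S_a^{N_a}$. The key simplification is that for a zero $a$ of order $N_a$ one has $T_{\overline\phi}(k_a^{(n)})=0$ for $0\le n\le N_a-1$, which is immediate from \eqref{evalTphi} since $\phi^{(j)}(a)=0$ for $j<N_a$. Hence the first term of Lemma~\ref{lemma:deltaformula} vanishes, and expanding the remaining term by the Leibniz rule gives, for $f\in H^2$,
\[
k_a^{(n)}\times f=\sum_{i=0}^{n}\binom{n}{i}\,g_f^{(i)}(a)\,k_a^{(n-i)},\qquad g_f:=\overline{\Gamma_-(\,\cdot\,;f)},
\]
where $g_f^{(i)}$ is the $i$-th derivative in $\overline w$. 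Thus the multiplication is controlled entirely by the order of vanishing of $\Gamma_-(\,\cdot\,;f)$ at $w=a$.

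First I would pin down that order for $f=k_a^{(m)}$. Writing $\Gamma_-(w;f)=\langle Q_w,f\rangle$ with $Q_w=P_+\frac{\phi}{w-z}=\frac{\phi(z)-\phi(w)}{w-z}\in H^2$, and using $k_a^{(m)}=\partial_{\overline a}^m k_a$ together with conjugate-linearity of the inner product, one gets $\Gamma_-(w;k_a^{(m)})=\partial_a^m Q_w(a)$, whence $\sum_m\frac{s^m}{m!}\Gamma_-(w;k_a^{(m)})=Q_w(a+s)=\frac{\phi(a+s)-\phi(w)}{w-a-s}$. Substituting $w=a+\xi$ and inserting $\phi(a+u)=\sum_{k\ge N_a}c_k u^k$ with $c_{N_a}\ne 0$ yields $\Gamma_-(a+\xi;k_a^{(m)})=-m!\sum_{k\ge N_a}c_k\,\xi^{\,k-1-m}$, so $\Gamma_-(\,\cdot\,;k_a^{(m)})$ vanishes to order \emph{exactly} $N_a-1-m$ at $a$. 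Feeding this back into the product formula shows that $k_a^{(n)}\times k_a^{(m)}$ is a combination of the $k_a^{(j)}$ with $j\le n+m-N_a+1$, whose top coefficient (that of $k_a^{(n+m-N_a+1)}$) is nonzero exactly when $n+m\ge N_a-1$, and which vanishes identically when $n+m<N_a-1$. This filtered, ``triangular'' structure drives both parts.

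For (i), set $\nu=k_a^{(N_a-2)}$. An induction using the top-coefficient rule shows $\nu^{\times p}=c_p\,k_a^{(N_a-1-p)}+(\text{terms of strictly larger index})$ with $c_p\ne 0$ for $1\le p\le N_a-1$, and $\nu^{\times N_a}=0$. The family $\{\nu^{\times p}\}_{p=1}^{N_a-1}$ is therefore triangular with respect to $\{k_a^{(N_a-2)},\dots,k_a^{(0)}\}$, hence a basis of $\mathcal S_a^{N_a-1}$, and $\nu\mapsto x$ (so $\nu^{\times p}\mapsto x^p$) is an algebra isomorphism onto the nonunital nilpotent algebra with generator $x$ and relation $x^{N_a}=0$. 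In particular $\mathcal S_a^{N_a-1}$ is a nil ideal. For (ii) it then suffices to show that $e_a$ is an identity for $\mathcal S_a^{N_a}$: a nonzero idempotent identity cannot lie in a nil ideal, so $\mathcal S_a^{N_a}=\C e_a\oplus\mathcal S_a^{N_a-1}$ as algebras, which together with (i) exhibits $\mathcal S_a^{N_a}$ as the unitization, $e_a\mapsto 1$ and $k_a^{(N_a-2)}\mapsto x$.

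The main obstacle is verifying that $e_a$ is the identity, as this is the only place the B\'ezout construction enters. By the product formula, $e_a\times k_a^{(m)}=k_a^{(m)}$ for all $m$ is equivalent to $\Gamma_-(\,\cdot\,;e_a)\equiv 1\pmod{(w-a)^{N_a}}$, i.e.\ to $\langle R_j,e_a\rangle=\delta_{j0}$ for $0\le j\le N_a-1$, where $R_j=\partial_w^j Q_w|_{w=a}=-\,j!\,\phi/(z-a)^{j+1}\in H^2$. I would then use that $e_a=P_+(\overline{z\alpha}u_a)$ lies in $(u_aH^2)^\perp$ and that $|u_a|=1$ on $\T$: the pairing collapses, after cancelling $u_a$ and $z$, to a single residue at $z=a$, equal to $(\psi\alpha)^{(j)}(a)$ up to an overall sign. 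Finally the B\'ezout relation $\alpha\psi-u_a\beta=1$ together with $\ord_a(u_a)=N_a$ forces $(\psi\alpha)^{(j)}(a)=\delta_{j0}$ for $j<N_a$ (the $u_a\beta$ term contributes nothing below order $N_a$), which identifies $e_a$—with the sign fixed by that very normalization—as the unit and completes (ii).
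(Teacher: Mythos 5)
Your proof is correct and follows essentially the same route as the paper: both rest on Lemma \ref{lemma:deltaformula} together with the vanishing of $T_{\overline\phi}$ on $\mathcal S_a^{N_a}$, the exact order of vanishing of $\Gamma_-(\,\cdot\,;k_a^{(m)})$ at $a$ (the paper only records the case $m=N_a-2$), and the B\'ezout relation to show $e_a$ is a unit; your generating-function and residue computations merely make explicit what the paper leaves terse. The overall sign you flag in $\Gamma_-(a;e_a)$ is a genuine but harmless ambiguity that is equally present in the paper's own computation of $\langle\frac{\psi}{w-z},\overline{z\alpha}\rangle$, and is absorbed by renormalizing $\alpha$ (equivalently $e_a$).
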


\begin{proof}
(i) It suffices to show that the elements $\displaystyle{\times_{i=1}^m k_a^{(N_a-2)}}$, with $m=1,2,\dots,N_a-1$, span $\mathcal S_a^{N_a-1}$ and that $\displaystyle{\times_{i=1}^{N_a} k_a^{(N_a-2)}}=0$. These assertions, in turn, will follow once we have shown that 
\begin{enumerate}
\item
$k_a^{(N_a-2)}\times k_a^{(m)}\in \mathcal \mathcal S^{m-1}_a\backslash \mathcal S^{m-2}_a$ for all $m=1,2,\dots,N_a-1$, and 
\item
$k_a^{(N_a-2)}\times k_a=0$.
\end{enumerate}
From Lemma \ref{lemma:deltaformula}, we have
\[
k_a^{(N_a-2)}\times k^{(m)}_a=\overline{\Gamma_-( a;k_a^{(N_a-2)})}k^{(m)}_a+\frac{d}{d\overline{w}}\overline {\Gamma_-(w;k_a^{(N_a-2)})}|_{w=a}\cdot k^{(m-1)}_a+\dots.
\]
Both (1) and (2) above  follow from $\Gamma_-(\overline a;k_a^{(N_a-2)})=0$ and 
$\frac{d}{dw}\Gamma_-( w;k_a^{(N_a-2)})|_{w=a}\neq 0$.

(ii) Since $\mathcal S_a^{N_a}/\mathcal S_a^{N_a-1}$ is one dimensional, it suffices to show that
$e_a$ is a unit of $\mathcal S_a^{N_a}$ (and use (i)). 

We must show that $e_a\times k_a^{(m)}=k_a^{(m)}$ for all $m\leq N_a$. By Lemma \ref{lemma:deltaformula}, this is equivalent to $\Gamma_-(a;e_a)=1$ and $\frac{d^m}{d w^m}\Gamma_-(w;e_a)|_{w=a}=0$ for all $m=1,2,\dots,N_a$. Thus, we must show that 
$a$ is a zero of $\Gamma_-(w;e_a)-1$ of order at least  $N_a$. Let us compute $\Gamma_-(w;e_a)$:
\begin{align*}
\Gamma_-(w;e_a) &=\langle  \frac{\phi}{w-z},e_a\rangle
=\langle \frac{\phi}{w-z},\overline{z\alpha}u_a\rangle=\langle\frac{\psi}{w-z},\overline {z\alpha}\rangle =
\langle\alpha\psi,\frac{1}{1-z\overline w}\rangle\\
&=\langle1+u_a\beta,\frac{1}{1-z\overline w}\rangle=1+ u_a(w)\beta(w)
=1+\Big(\frac{w-a}{1-\overline{a}w}\Big)^{N_a}\beta(w).\qedhere
\end{align*}
\end{proof}
%

 Recall the exact sequence \eqref{shortexact}. We can now conclude that this sequence splits, since
the map
 $\gamma_-\colon \Rat^\times(D)\to \ker \gamma_+$ defined by
\[
\gamma_-(r)=\sum_{\{|a|<1\mid\phi(a)=0\}} r\times e_a\] 
 is a right inverse of the inclusion $\ker \gamma_+\hookrightarrow \Rat^\times (D)$.
Thus, $\Rat^\times(D)\cong \ker\gamma_+\oplus z\Rat(D)$.
We summarize our findings in the following theorem:

\begin{theorem}\label{th:timesalgebra}
The following propositions are true.
\begin{enumerate}[(i)]
\item
$\Rat^\times(D)\stackrel{(\gamma_-,\gamma_+)}\longrightarrow \ker\gamma_+\oplus z\Rat(D)$
is an isomorphism.
\item
$\ker\gamma_+=\bigoplus_{\{|a|<1\mid \phi(a)=0\}} S_a^{N_a}$, where the projection onto the $a$-th summand is given by $r\mapsto r\times e_a$.
\item
For each $|a|<1$, zero of $\phi$, there is an isomorphism from $S_a^{N_a}$ to $(\C[x]/(x^{N_a}))^\sim$ such that  $e_a\mapsto 1$ and $k_a^{(N_a-2)}\mapsto x$. Here $(\C[x]/(x^{N_a}))^\sim$ denotes the unitization of
$\C[x]/(x^{N_a})$.
\end{enumerate}
\end{theorem}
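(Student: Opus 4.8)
The plan is chiefly to organize facts already assembled in the discussion preceding the theorem and to pin down part (i). Part (iii) is a verbatim restatement of clause (ii) of the Lemma just above, with $(\C[x]/(x^{N_a}))^\sim$ denoting the unitization. For part (ii), the decomposition $\ker\gamma_+=\bigoplus_a S_a^{N_a}$ is \eqref{kergamma}, already seen to be a ring-theoretic direct sum of mutually $\times$-orthogonal ideals; the projection formula then follows because $e_a$ is the identity of $S_a^{N_a}$ (by (iii)) and the summands are orthogonal, so that $e_a\times e_b=\delta_{ab}e_a$, and hence for $r=\sum_b r_b$ with $r_b\in S_b^{N_b}$ one gets $r\times e_a=r_a$.

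The substance is part (i), for which I would show that $(\gamma_-,\gamma_+)$ is a bijective algebra homomorphism onto the product algebra $\ker\gamma_+\oplus z\Rat(D)$. First I would record the properties of $\gamma_-(r)=\sum_a r\times e_a$. Since each $S_a^{N_a}$ is an ideal, $r\times e_a\in S_a^{N_a}$, so $\gamma_-$ indeed maps into $\ker\gamma_+$, and linearity is immediate from the bilinearity of $\times$. That $\gamma_-$ is multiplicative I would derive from the relation $e_a\times e_b=\delta_{ab}e_a$: using commutativity and associativity,
\[
\gamma_-(r)\times\gamma_-(s)=\sum_{a,b}(r\times e_a)\times(s\times e_b)=\sum_a r\times s\times e_a=\gamma_-(r\times s).
\]
Finally, the projection formula of part (ii) shows that $\gamma_-$ restricts to the identity on $\ker\gamma_+$.

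With these facts in hand, $(\gamma_-,\gamma_+)$ is an algebra homomorphism into the product, both coordinates being homomorphisms ($\gamma_+$ by the earlier Lemma). It is injective: if $\gamma_+(r)=0$ then $r\in\ker\gamma_+$, whence $\gamma_-(r)=r$, so $(\gamma_-,\gamma_+)(r)=0$ forces $r=0$. It is surjective: given $(k,zg)$, I would pick $r_0$ with $\gamma_+(r_0)=zg$ (possible since $\gamma_+$ is onto $z\Rat(D)$) and set $r=r_0+(k-\gamma_-(r_0))$; as $k-\gamma_-(r_0)\in\ker\gamma_+$, it is fixed by $\gamma_-$ and annihilated by $\gamma_+$, giving $\gamma_+(r)=zg$ and $\gamma_-(r)=k$. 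Hence $(\gamma_-,\gamma_+)$ is an isomorphism, which is (i).

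The only step calling for genuine care is the multiplicativity of $\gamma_-$; once the $e_a$ are known to be orthogonal idempotents with $e_a$ the unit of $S_a^{N_a}$, the remainder is just the formal splitting of the exact sequence \eqref{shortexact} and is routine.
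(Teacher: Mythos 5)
Your proposal is correct and follows essentially the same route as the paper: the theorem is stated there as a summary of the preceding material, with part (iii) being the lemma identifying $S_a^{N_a}$ with $(\C[x]/(x^{N_a}))^\sim$, part (ii) being the decomposition \eqref{kergamma} together with the fact that the $e_a$ are mutually orthogonal idempotents acting as units on their summands, and part (i) being the splitting of the exact sequence \eqref{shortexact} via the retraction $\gamma_-(r)=\sum_a r\times e_a$. Your explicit verification of the multiplicativity of $\gamma_-$ and of the bijectivity of $(\gamma_-,\gamma_+)$ merely makes precise what the paper compresses into the statement that the sequence splits.
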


We are now ready to describe when two elements of $\Rat^\times(D)$ are in the same orbit of the action of the group of circle invertible elements.

\begin{corollary}\label{cor:circlegroup}
\begin{enumerate}[(i)]
\item
The element $t\in \Rat^\times (D)$ is circle invertible if and only if  $1-\gamma_+(t)$ is invertible in $\Rat(D)$
and $\Gamma_-(a;t)\neq 1$ for any $|a|<1$, zero of $\phi$.
\item
Let $r,s\in \Rat^\times(D)$. There exists a circle invertible element $t\in \Rat^\times(D)$
such that $r\circ t=s$  if and only if $r$ and $s$ satisfy conditions (a) and (b) of Theorem \ref{main}. 
\end{enumerate}
\end{corollary}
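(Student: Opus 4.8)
The plan is to reduce circle invertibility in $\Rat^\times(D)$ to ordinary invertibility in its unitization and then transport everything through the structural isomorphism of Theorem \ref{th:timesalgebra}. Write $\mathbf 1$ for the unit adjoined to $\Rat^\times(D)$. The elementary identity $\mathbf 1-(r\circ t)=(\mathbf 1-r)\times(\mathbf 1-t)$ shows that $t$ is circle invertible if and only if $\mathbf 1-t$ is invertible in $(\Rat^\times(D))^\sim$, and that $r\circ t=s$ is equivalent to $(\mathbf 1-r)\times(\mathbf 1-t)=\mathbf 1-s$. Thus part (ii) amounts to the assertion that $\mathbf 1-r$ and $\mathbf 1-s$ are associates in $(\Rat^\times(D))^\sim$, i.e. differ by a unit factor; since $r,s$ lie in $\Rat^\times(D)$, the augmentation forces any such factor to again have the form $\mathbf 1-t$ with $t\in\Rat^\times(D)$.

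By Theorem \ref{th:timesalgebra} the algebra splits as a direct sum of ideals $\Rat^\times(D)\cong\big(\bigoplus_{a}S_a^{N_a}\big)\oplus z\Rat(D)$, the sum over the zeros $a$ of $\phi$ in the open disc, and circle composition respects this splitting, so both the invertibility of $\mathbf 1-t$ and the associate relation decouple summand by summand (all but finitely many $S_a^{N_a}$-components of a given element vanish, hence are trivially invertible). The unitization of the $z\Rat(D)$ factor is simply $\Rat(D)=\C\mathbf 1\oplus z\Rat(D)$, so its contribution to $\mathbf 1-t$ is $1-\gamma_+(t)$, invertible there precisely when $1-\gamma_+(t)$ is invertible in $\Rat(D)$. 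As the units of $\Rat(D)$ are the rational functions with neither zeros nor poles in $D$, two elements of $\Rat(D)$ are associates exactly when they share the same zeros, with the same multiplicities, throughout $D$; applied to $1-\Gamma_+(\cdot;r)$ and $1-\Gamma_+(\cdot;s)$ this reproduces condition (a) of Theorem \ref{main}. This already yields the $\gamma_+$-clauses in both parts of the corollary.

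It remains to treat each local summand $S_a^{N_a}$. By Theorem \ref{th:timesalgebra}(iii) this is a commutative local ring with unit $e_a$, nilpotent maximal ideal $\mathfrak m$, and residue field $\C$; I would first check that the residue map is $y\mapsto\Gamma_-(a;y)$ (consistent with the conjugation in Lemma \ref{lemma:deltaformula}), using $\Gamma_-(a;e_a)=1$ together with $\Gamma_-(a;k_a^{(N_a-2)})=0$. The local component of $\mathbf 1-t$ is $e_a-t\times e_a$, whose residue is $1-\Gamma_-(a;t)$, so it is a unit exactly when $\Gamma_-(a;t)\neq1$; combined with the previous paragraph this proves part (i). For part (ii) I would use that in a local ring two elements are associates if and only if they have the same $\mathfrak m$-adic valuation (capped at $N_a$ since $\mathfrak m^{N_a}=0$), so the associate relation across all summands becomes $v(e_a-r\times e_a)=v(e_a-s\times e_a)$ for every zero $a$.

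The main obstacle is to identify this intrinsic valuation with the capped vanishing order of condition (b). I claim that for $y\in S_a^{N_a}$ one has $v(y)=\min(N_a,\ord_a(\Gamma_-(\cdot;y)))$ and that $\Gamma_-(w;r\times e_a)\equiv\Gamma_-(w;r)\pmod{(w-a)^{N_a}}$, whence $\Gamma_-(\cdot;e_a-r\times e_a)\equiv 1-\Gamma_-(\cdot;r)$ and $v(e_a-r\times e_a)=\min(N_a,\ord_a(1-\Gamma_-(\cdot;r)))$. Granting this, $v(e_a-r\times e_a)=v(e_a-s\times e_a)$ is exactly condition (b) of Theorem \ref{main} at $w=a$ (at points that are not zeros of $\phi$ both sides of (b) vanish, so nothing is imposed), and part (ii) follows; together with Corollary \ref{cor:circlesimilar} this closes the implication (ii)$\Rightarrow$(i) of Theorem \ref{main}. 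To prove the claim I would regard the germ $w\mapsto\Gamma_-(w;\cdot)$ at $a$ as a linear map $S_a^{N_a}\to\C[w]/((w-a)^{N_a})$ and, using Lemma \ref{lemma:deltaformula} and the isomorphism $k_a^{(N_a-2)}\mapsto x$, verify that $\ord_a\Gamma_-(\cdot;x^{\times j})=j$ for $0\le j<N_a$; since $\Gamma_-(\cdot;\cdot)$ is conjugate-linear in its second slot, the lowest surviving power then controls the order and the valuation is read off directly. Checking $\ord_a\Gamma_-(\cdot;x^{\times j})=j$ — equivalently that multiplication by the nilpotent generator raises the vanishing order by exactly one — is the one genuinely computational point, and is where Lemma \ref{lemma:deltaformula} does the real work.
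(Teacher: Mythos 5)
Your proposal is correct and follows essentially the same route as the paper's proof: decompose via Theorem \ref{th:timesalgebra} into the $z\Rat(D)$ part (handled by associates/invertible factors in $\Rat(D)$, giving condition (a)) and the local summands $S_a^{N_a}\cong(\C[x]/(x^{N_a}))^\sim$ (handled by unit/nilpotency-order considerations translated through Lemma \ref{lemma:deltaformula} into the vanishing order of $1-\Gamma_-(\cdot;r)$ at $a$, giving condition (b)). Your packaging via the unitization identity $\mathbf 1-(r\circ t)=(\mathbf 1-r)\times(\mathbf 1-t)$ and the $\mathfrak m$-adic valuation is only a cosmetic reformulation of the paper's argument, and the ``one genuinely computational point'' you isolate is exactly what the paper's lemma on the structure of $S_a^{N_a}$ already establishes.
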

\begin{proof}
(i) By the previous theorem, $t\in \Rat^\times(D)$ is circle invertible if $\gamma_-(t)\in \ker \gamma_+$ and $\gamma_+(t)\in z\Rat(D)$ are circle invertible. The latter condition is equivalent  to $1-\gamma_+(t)$ being invertible in $\Rat(D)$, while
the former is equivalent to $e_a-e_a\times t\in \mathcal S_a^{N_a}$ being invertible for all $|a|<1$, zero of $\phi$. An element
of $(\C[x]/(x^{N_a}))^\sim$ is invertible if and only if it is not in the nil ideal $\C[x]/(x^{N_a})$. Applied to $\mathcal S_a^{N_a}$, this is equivalent
to the coefficient of $k_a^{(N_a-1)}$ in $k_a^{N_a-1}\times t$ being different from  1. By Lemma \ref{lemma:deltaformula}, this is the same as $\Gamma_-(a;t)\neq 1$.

(ii) In order for $r$ and $s$ to be related by circle invertible elements, we must have that
\begin{enumerate}
\item
$\gamma_+(r)$ and $\gamma_+(s)$ are related by a circle invertible element of $z\Rat(D)$,
\item
for each $|a|<1$, zero if $\phi$,  $e_a\times r$ and $e_a\times t$ are related by a circle invertible element of $S_a^{N_a}$.
\end{enumerate} 
The first condition is equivalent to $1-\gamma_+(r)$ and $1-\gamma_+(s)$ differing by an invertible factor
of $\Rat(D)$. This is equivalent to condition (a) of Theorem \ref{main} (ii). 
The second condition is equivalent to $e_a-e_a\times r$ and $e_a-e_a\times s$ being both invertible or having the same order of nilpotency for each $a$ zero of $\phi$. (This criterion is easily verified in $(\C[x]/(x^{N_a}))^\sim$.) In turn, this is equivalent to 
\[
k_a^{(N_a-1)}-k_a^{(N_a-1)}\times r\in \mathcal S_a^m\Leftrightarrow k_a^{(N_a-1)}-k_a^{(N_a-1)}\times s\in \mathcal S_a^m,
\]
for all $m=1,2,\dots,N_a$. By Lemma \ref{lemma:deltaformula}, this is equivalent to condition (b) of Theorem \ref{main} (ii). 
%
\end{proof}

\begin{proof}[Proof of Theorem \ref{main} (ii)$\Rightarrow$(i).]This follows at once from Corollary \ref{cor:circlesimilar} (ii) and Corollary \ref{cor:circlegroup}.
\end{proof}

\section{Proof of (i) implies (ii)}\label{pointspectrum}

In this section we prove the implication (i)$\Rightarrow$(i) of Theorem \ref{main}. We start with a lemma.

\begin{lemma}
Let $A$ be a bounded  operator on a Hilbert space and let $B$ be a left inverse for $A$, i.e., $BA=I$. 
Let $\tilde A=A+f\otimes g$.
\begin{enumerate}[(i)]
\item
We have $\ker \tilde A\neq 0$ if and only if 
$ABf=f$ and 
$1+\langle Bf,g\rangle=0$. In this case  $\ker \tilde A=\mathrm{span}\{Bf\}$.
\item
Assume that $\ker \tilde A\neq 0$.
For $k>1$ we have that $\ker \tilde A^k\neq \ker \tilde A^{k-1}$ if and only if
$AB^if=B^{i-1}f$ for $1<i\leq k$ and 
$\langle B^if,g\rangle=0$ for $1<i\leq k$.
In this case $\ker \tilde A^k=\mathrm{span}\{B^if\mid 1\leq i\leq k\}$.
\end{enumerate}
\end{lemma}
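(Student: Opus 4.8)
The plan is to prove both parts from the single identity obtained by applying the left inverse $B$ to the defining equation of $\tilde A$. For any $h$, since $\tilde A h = Ah + \langle h,g\rangle f$ and $BA=I$, we have
\[
h = B\tilde A h - \langle h,g\rangle Bf.
\]
For part (i), if $\tilde A h = 0$ this reads $h = -\langle h,g\rangle Bf$, so $\ker\tilde A\subseteq\mathrm{span}\{Bf\}$ and moreover a nonzero $h$ must have $\langle h,g\rangle\neq0$. Pairing $h=-\langle h,g\rangle Bf$ with $g$ gives $\langle h,g\rangle(1+\langle Bf,g\rangle)=0$, hence $1+\langle Bf,g\rangle=0$; substituting $h=Bf$ into $\tilde A h=0$ then yields $ABf=f$. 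The converse is the direct check $\tilde A(Bf)=ABf+\langle Bf,g\rangle f=f-f=0$, with $Bf\neq0$ because $ABf=f\neq0$.

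For part (ii) I would argue by induction on $k$, with inductive hypothesis that $\ker\tilde A^{k-1}=\mathrm{span}\{B^if:1\leq i\leq k-1\}$ and that these vectors are linearly independent. First I would record the standard fact that the kernel chain of a single operator, once it stabilizes, stays stable: if $\ker\tilde A^{j}=\ker\tilde A^{j-1}$ then $\ker\tilde A^{j+1}=\ker\tilde A^{j}$. Consequently $\ker\tilde A^k\neq\ker\tilde A^{k-1}$ forces strict growth at every earlier step, so the inductive hypothesis and the conditions at levels $1<i\leq k-1$ are available for free. Applying $B$ as above to $h\in\ker\tilde A^k$, and using $\tilde A h\in\ker\tilde A^{k-1}=\mathrm{span}\{B^if:i\leq k-1\}$, gives $h\in\mathrm{span}\{B^if:1\leq i\leq k\}$, so this span always contains $\ker\tilde A^k$.

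Then I would pin down exactly when the chain grows. Writing $h=cB^kf+w$ with $w\in\ker\tilde A^{k-1}$ and using $\tilde A^k w=0$, one sees $\tilde A^k h=c\,\tilde A^kB^kf$, so growth is equivalent to $B^kf\in\ker\tilde A^k\setminus\ker\tilde A^{k-1}$; in particular $\{B^if:1\leq i\leq k\}$ is then linearly independent. Applying the displayed relation to $h=B^kf$, writing $\tilde A B^kf=\sum_{i=1}^{k-1}c_iB^if$, and comparing coefficients in this independent system forces $c_{k-1}=1$, all other $c_i=0$, and $\langle B^kf,g\rangle=0$; hence $\tilde A B^kf=B^{k-1}f$, and since $\langle B^kf,g\rangle=0$ also $AB^kf=B^{k-1}f$. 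This is precisely the pair of conditions at level $k$, giving necessity. Conversely, these conditions make $\tilde A$ act as the shift $B^if\mapsto B^{i-1}f$ (with $Bf\mapsto0$) along the chain, so $\{B^if:1\leq i\leq k\}$ is a genuine Jordan chain contained in $\ker\tilde A^k$; combined with the inclusion already established, this gives $\ker\tilde A^k=\mathrm{span}\{B^if:1\leq i\leq k\}$ and strict growth, closing the induction.

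The delicate points are organizational rather than conceptual: I must ensure the linear independence of $\{B^if\}$ is in hand exactly when the coefficients are compared (it is supplied by the growth assumption), and that the lower-index conditions are furnished by kernel-chain monotonicity together with the inductive hypothesis. I expect the coefficient comparison — extracting $AB^kf=B^{k-1}f$ and $\langle B^kf,g\rangle=0$ from $h=B\tilde A h-\langle h,g\rangle Bf$ — to be the crux, since this is where the specific algebraic form of the conditions in the statement emerges.
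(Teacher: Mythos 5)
Your proof is correct and follows essentially the same route as the paper's: an induction on $k$ that exploits $BA=I$ by applying $B$ to the relation $\tilde A h = Ah + \langle h,g\rangle f$, together with the linear independence of the Jordan chain $\{B^if\}$. The only organizational difference is in the necessity direction, where the paper produces a preimage of $B^kf$ under $\tilde A$ (using that $\tilde A$ maps $\ker \tilde A^{k+1}$ onto $\ker \tilde A^{k}$) and then applies $B$, while you first establish the containment $\ker \tilde A^{k}\subseteq \mathrm{span}\{B^if \mid 1\leq i\leq k\}$ and extract the conditions by comparing coefficients; both reduce to the same computation.
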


\begin{proof}
(i) This is a straightforward computation (left to the reader).

(ii) Since $\ker \tilde A$ has dimension 1 (by (i)), the dimension of $\ker \tilde A^k$, for $k=1,2,\dots$, grows by 1 and then becomes stationary.
So $\dim \ker \tilde A^k\leq k$. If $\langle B^if,g\rangle=0$ for $1<i\leq k$ (and -1 for $i=1$) and $AB^if=B^{i-1}f$
then we easily verify that $\mathrm{span}\{B^if\mid 1\leq i\leq k\}\subset \ker \tilde A^k$. Also, the vectors on the left side are linearly independent (they form a Jordan chain). So we must have equality.
This also shows that $\ker \tilde A^{k-1}\neq \ker \tilde A^k$.

We will prove the other implication by induction on $k$. Assume it is true for $k$. 
Suppose that $\ker \tilde A^{k+1}\neq \ker \tilde A^k$. Since $\tilde A$ maps $\ker \tilde A^{k+1}$
surjectively onto $\ker \tilde A^k$, there exists $x$ such that $\tilde Ax=B^kf$. That is,
$Ax+f\langle x,g\rangle=B^kf$. Multiplying by $B$ we get $x+Bf\langle x,g\rangle=B^{k+1}f$.
It follows that $\tilde AB^{k+1}f=B^kf$. This in turn implies that $AB^{k+1}f+f\langle B^{k+1}f,g\rangle=B^kf$.
Multiplying by $B$ and using that $Bf\neq 0$ we get $\langle B^{k+1}f,g\rangle=0$. Then $AB^{k+1}f=B^kf$.     
\end{proof}

\begin{proposition}
Let $r\in \Rat(D)$ and $\phi\in H^2$. Set $U+r\otimes \phi=U_r$.
\begin{enumerate}[(i)]
\item
Let $|w|\leq 1$. Then 
$\dim \ker (1-wU_r)^k=\min (k,\mathrm{ord}_w(1-\Gamma_+(w;r))$.

\item
Let $|w|<1$. Then $
\dim \ker (U_r^*-w)^k=\min (k,\mathrm{ord}_w(\phi),\mathrm{ord}_w(1-\Gamma_-(w;r)))$.
\end{enumerate}
\end{proposition}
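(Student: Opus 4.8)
The plan is to deduce both statements from the preceding lemma, applied to the two rank-one perturbations $1-wU_r=(1-wU)+(-wr)\otimes\phi$ and $U_r^*-w=(U^*-w)+\phi\otimes r$. In each case the growth of the kernels $\ker\tilde A^k$ is governed by the two families of scalar conditions appearing in the lemma, and the whole argument amounts to recognizing these conditions as the vanishing of successive derivatives of $1-\Gamma_+$, respectively of $\phi$ and of $1-\Gamma_-$.

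For (ii) I would take $A=U^*-w$ (multiplication by $z-w$), $f=\phi$, $g=r$, so that $\tilde A=U_r^*-w$. Since $|w|<1$, a bounded left inverse is the difference-quotient operator $Bh=(h-h(w))/(z-w)$, which satisfies $BA=I$. Iterating, $B^i\phi$ is the $i$-th Taylor remainder of $\phi$ at $w$ divided by $(z-w)^i$, and a direct computation gives $AB^if=B^{i-1}f-\phi^{(i-1)}(w)/(i-1)!$; thus the lemma's condition $AB^if=B^{i-1}f$ is exactly $\phi^{(i-1)}(w)=0$. When $\phi$ vanishes to order $\geq i$ at $w$ one has $B^i\phi=\phi/(z-w)^i$, and then $\langle B^i\phi,r\rangle$ is a nonzero multiple of $\partial_w^{i-1}\Gamma_-(w;r)$ (for $i=1$ one finds $\langle B\phi,r\rangle=-\Gamma_-(w;r)$, so $1+\langle B\phi,r\rangle=1-\Gamma_-(w;r)$). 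Hence the kernel gains a dimension at step $i$ precisely when $\ord_w(\phi)\geq i$ and $\ord_w(1-\Gamma_-(w;r))\geq i$, and the lemma yields $\dim\ker(U_r^*-w)^k=\min(k,\ord_w(\phi),\ord_w(1-\Gamma_-(w;r)))$.

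For (i) I would take $A=1-wU$, $f=-wr$, $g=\phi$, so $\tilde A=1-wU_r$; the case $w=0$ is immediate, so assume $w\neq0$. When $|w|<1$ the operator $A$ is boundedly invertible and I use $B=A^{-1}$; then the conditions $AB^if=B^{i-1}f$ hold automatically, and only the scalar conditions $\langle B^if,g\rangle=0$ survive. Writing $c_i(w)=\langle(1-wU)^{-i}r,\phi\rangle$, so that $\langle B^if,g\rangle=-wc_i$ and the lemma's requirements become $wc_1=1$ and $c_2=\dots=c_k=0$, I would use the resolvent identity $wU(1-wU)^{-1}=(1-wU)^{-1}-1$ to obtain $c_i'=\tfrac{i}{w}(c_{i+1}-c_i)$. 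Setting $d_i=w^ic_i$ this becomes $d_{i+1}=\tfrac{w^2}{i}d_i'$ with $d_1=\Gamma_+(w;r)$; the resulting system is triangular with nonvanishing diagonal, so $\{d_1=1,\ d_2=\dots=d_k=0\}$ is equivalent to $\{\Gamma_+(w;r)=1,\ \Gamma_+'(w;r)=\dots=\Gamma_+^{(k-1)}(w;r)=0\}$, i.e.\ to $\ord_w(1-\Gamma_+(w;r))\geq k$. This settles the formula for $|w|<1$.

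The main obstacle is the boundary $|w|=1$, where $1-wU$ has no bounded left inverse (its adjoint is multiplication by the outer, noninvertible function $1-\overline w z$), so the lemma does not apply verbatim. I would get around this by passing to the dense, $U_r$-invariant subspace $\Rat(D)$. On each summand $\mathcal S_a$ the operator $1-wU$ acts as $1-w(\overline a+N)$ with $1-w\overline a\neq0$ for every $|a|<1$, $|w|\leq1$, so $1-wU$ restricts to a linear bijection of $\Rat(D)$; moreover, having no eigenvalue of modulus $\geq1$, the backward shift makes $1-wU$ injective on all of $H^2$. An induction on $k$ then shows $\ker(1-wU_r)^k\subseteq\Rat(D)$: if $f$ lies in this kernel then $(1-wU_r)f$ lies in the previous kernel, hence in $\Rat(D)$, which forces $(1-wU)f\in\Rat(D)$ and, by injectivity, $f$ equal to the rational preimage. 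On $\Rat(D)$ the operator $B=(1-wU)^{-1}$ is now an honest (algebraic) inverse, the functions $c_i(w)$ are rational in $w$ and agree with the analytic continuation of $\Gamma_+$ across $|w|=1$, and the purely linear-algebraic content of the lemma applies word for word. This yields the formula in (i) uniformly for $|w|\leq1$.
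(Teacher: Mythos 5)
Your proof is correct and follows essentially the same route as the paper: both parts are reduced to the preceding lemma with the same choices of $A$ and $B$ (the bounded inverse of $1-wU$ for $|w|<1$, the restriction to $\Rat(D)$ for $|w|=1$, and the difference-quotient operator $T_{\frac{1}{z-w}}$ for part (ii)), and the scalar conditions are then identified with vanishing of derivatives of $1-\Gamma_+$, $\phi$, and $1-\Gamma_-$. You merely supply more detail than the paper at the two places it leaves to the reader, namely the recursion converting $\langle(1-wU)^{-i}r,\overline{w}\phi\rangle$ into derivatives of $\Gamma_+$, and the verification that $\ker(1-wU_r)^k\subseteq\Rat(D)$ when $|w|=1$.
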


\begin{proof}
(i) We have that $1-wU_r=(1-wU)-r\otimes \overline{w}\phi$. Assume first that $|w|<1$. We can apply the previous lemma with $A=1-wU$ and
 $B=(1-wU)^{-1}$. We get that $\dim \ker (1-cU^{\rho})^k=k$
if and only if  $\langle (1-wU)^{-1}r,\overline{w}\phi\rangle=1$ and $\langle (1-wU)^{-i}r,\overline{w}\phi\rangle=0$
for $1<i\leq k$. 
This leads to  $\mathrm{ord}_w(1-\Gamma_+(w;r))\geq k$. 

The case $|w|=1$ can be handled similarly. In this case
we set $A=1-wU$ and $B=T_{\frac{1}{1-w\overline z}}$. Observe that, although $B$ is not bounded, it maps $\Rat(D)$
surjectively onto itself. Also, $A$ maps $\Rat(D)$ into itself,  and $BAf=f$ for all $f\Rat(D)$. This makes the computations of the previous lemma still applicable, since it is easy to check that in this case $\ker \tilde A^k\subseteq \Rat(D)$, for all $k$. So we may restrict our computations to $\Rat(D)$ from the outset.

(ii) We have that $U_r^*-wI=(U^*-wI)+\phi\otimes r$. Thus, we can apply the previous lemma with $A=U^*-wI$ and 
$B=T_{\frac{1}{z-w}}$. We get that $\dim \ker (U_r^*-w)^k=k$ if and only if 
\begin{enumerate}
\item
$(U^*-wI)(T_{\frac{1}{z-w}})^i\phi=(T_{\frac{1}{z-w}})^{i-1}\phi$ for $1\leq i\leq k$,
\item 
$\langle T_{\frac{1}{z-w}}\phi,r\rangle=1$, and  $\langle (T_{\frac{1}{z-w}})^i\phi,r\rangle=0$ for $1<i\leq k$. 
\end{enumerate}
The first condition is satisfied if and only if $\mathrm{ord}_w(\phi)\geq k$, and the second if and only if  $\mathrm{ord}_w(1-\Gamma_-(w;r))\geq k$. This proves (ii).
\end{proof}

\begin{proof}[Proof of Theorem \ref{main}(i) $\Rightarrow$ (ii)]
For each $k=1,2,\dots$, the quantities $\dim \ker (1-wU_r)^k$ and $\dim \ker (U_r^*-w)^k$  are similarity invariants.
This, combined with the previous proposition, proves the implication (i) $\Rightarrow$ (ii) in Theorem \ref{main}.
\end{proof}

\begin{bibdiv}
\begin{biblist}

\bib{cassier-timotin}{article}{
   author={Cassier, Gilles},
   author={Timotin, Dan},
   title={Power boundedness and similarity to contractions for some
   perturbations of isometries},
   journal={J. Math. Anal. Appl.},
   volume={293},
   date={2004},
   number={1},
   pages={160--180},
}

\bib{clark}{article}{
   author={Clark, Douglas N.},
   title={One dimensional perturbations of restricted shifts},
   journal={J. Analyse Math.},
   volume={25},
   date={1972},
   pages={169--191},
}

\bib{nakamura}{article}{
   author={Nakamura, Yoshihiro},
   title={One-dimensional perturbations of the shift},
   journal={Integral Equations Operator Theory},
   volume={17},
   date={1993},
   number={3},
   pages={373--403},
}

\bib{robertsim}{article}{
   author={Robert, Leonel},
   title={Similarity of perturbations of Hessenberg matrices},
   journal={J. Operator Theory},
   volume={54},
   date={2005},
   number={1},
   pages={125--136},
}

\bib{sarason}{book}{
   author={Sarason, Donald},
   title={Sub-Hardy Hilbert spaces in the unit disk},
   series={University of Arkansas Lecture Notes in the Mathematical
   Sciences, 10},
   note={A Wiley-Interscience Publication},
   publisher={John Wiley \& Sons Inc.},
   place={New York},
   date={1994},
   pages={xvi+95},
}

\end{biblist}
\end{bibdiv}

\end{document}